\newtheorem{theorem}{Theorem}
\newtheorem{corollary}[theorem]{Corollary}
\newtheorem{example}[theorem]{Example}
\theoremstyle{definition}
\theoremstyle{remark}
\numberwithin{equation}{section}
\begin{document}
\title{Some nonlinear second order equation modelling rocket motion}
\author{Dorota Bors and Robert Sta\'{n}czy}
\address{\noindent Dorota Bors, Departament of Mathematics and Computer Science, \newline
University of \L \'{o}d\'{z} \\
S. Banacha 22\\
90-238 \L \'{o}d\'{z}; \newline \indent
Robert Sta\'nczy, Instytut Matematyczny\\
Uniwersytet Wroc\l awski,\newline
Pl. Grunwaldzki 2/4\\
50-384 Wroc\l aw;}
\email{bors@math.uni.lodz.pl, stanczr@math.uni.wroc.pl;}
\date{}
\subjclass{34B10, 34B15, 35A15, 35J25, 93C15}
\keywords{Dirichlet problem, Tsiolkovskii
equation, Me\v{s}\v{c}erskii equation, rocket motion}

\begin{abstract}
In this paper, we consider a nonlinear second order equation modelling rocket
motion in the gravitational field obstructed by the drag force. The proofs of
the main results are based on topological fixed point approach.

\end{abstract}

\maketitle

\section{Introduction}

In this paper we consider the following second order differential equation 
\begin{equation}
-\ddot{z}\left(  t\right)  =\alpha\left(  t\right)  \left(  \dot{z}\left(
t\right)  +a\right)  ^{2}\exp\left(  -\frac{z\left(  t\right)  }{H}\right)
+\beta\left(  t\right)  \label{BVP}%
\end{equation}
with suitably chosen functions $\alpha, \beta$ to be defined below by (\ref{albe}) and some constants $a, H$ to be determined later and suitable initial or boundary conditions derived from some model describing the motion of the rocket, cf. \cite{Mes}. Specifically, it can be derived from the one-dimensional first oder differential equation
\begin{equation}
m\left(  t\right)  \dot{v}\left(  t\right)  =\left(  w\left(  t\right)
-v\left(  t\right)  \right)  \dot{m}\left(  t\right)  +f^{ext}\left(
t\right).  \label{R1}%
\end{equation}
This equation can be used to model a rocket motion with decreasing mass $m=m\left(
t\right) $ and under external force $f^{ext}\left(
t\right)$. The rocket should be equipped with the engine that emits mass
$m(t_0)-m\left(  t\right)$ with velocities $w=w\left(  t\right)  ,$
$t\in\left[  t_{0},t_{1}\right]  .\,$Therefore, the sum of the
mass of the rocket $m\left(  t\right)  $ and the mass of the gas $m(t_0)-m(t)$ emitted at
time $t$ is constant and equal $m(t_0)=m_0$ which is some positive value. 
In $m_{0}$ we include all components of
the rocket infrastructure like the engine, empty tank, guidance equipment,
payload and the like. The function $f^{ext}=f^{ext}\left(  t\right) $ denotes all 
external forces that stimulate the rocket motion for example the gravity
force or the drag force and in fact may depend also on the position and the velocity
of the rocket, cf. (\ref{fex})

The expression $m\dot{v}(t)$ is the force acting on the rocket body which
is given by the amount of mass expelled and the velocity of the mass relative
to the rocket body, i.e. $(w(t)-v(t))\dot{m}(t)$. 
The rocket moves forward since the rocket loses mass as it
accelerates, so $\dot{m}(t)$ must be negative for any rocket. The term
$(w-v)\dot{m}$ is called the thrust
of the rocket and can be interpreted as an additional force on the rocket due
to the gas expulsion. The detailed derivation of the rocket equation one can
find, among others, in \cite{Kos, Mes, Par}.\newline The
equation $\left(  \ref{R1}\right)  $ is so called the Me\v{s}\v{c}erskii
rocket equation, for details see \cite{Mes,Kos}, where alos the case of more
that just one emitted mass was considered. If we assume, the external force to 
be zero and a constant relative velocity of the emitted mass, i.e. $f^{ext}=0$ and $w\left(
t\right)  -v\left(  t\right)  =:c<0$ for $t\in\left[  t_{0},t_{1}\right]  $,
then the equation $\left(  \ref{R1}\right)  $ has the form of the well-known
Tsiolkovskii rocket equation. \newline The corresponding solution to the
Tsiolkovskii equation with the initial conditions $m\left(  t_{0}\right)
=m_0$ and $v\left(  t_{0}\right)  =v_0$ can be easily found after
integration of $\left(  \ref{R1}\right)  $ and reads as follows%
\[
v\left(  t\right)  =v_0+c\ln\frac{m\left(  t\right)  }{m_0}.
\]
The Tsiolkovskii rocket equation in the gravitational field, i.e.
$f^{ext}=-mg$ with the initial conditions $m\left(  t_{0}\right)  =m_0$
and $v\left(  t_{0}\right)  =v_0\ $has a solution of the form%
\[\label{Tso}
v\left(  t\right)  =v_0-g\left(  t-t_{0}\right)  +c\ln\frac{m\left(
t\right)  }{m_0}.
\]

Except for the rocket equation, the Tsiolkovskii equation can also be applied
to the one photon decay of the excited nucleus. Moreover, it is possible to
identify the Me\v{s}\v{c}erskii equation with the bremsstrahlung equation
where the electron is in the permanent decay. Many other decays can be
investigated from the point of view of the Me\v{s}\v{c}erskii equation, as one
can see in \cite{Par}.

Assume that $f^{ext}=-mg-D$ with $D=\frac{1}{2}\rho v^{2}AC_{D}$ where $C_{D}$
is a drag coefficient, $A$ is the cross sectional area of the rocket, $\rho$
is the air density that changes with altitude $x,$ and can be approximated by
$\rho=\rho_{0}\exp\left(  -\frac{x}{H}\right)  $ where $H\approx8000m$ is the
so called "scale height" of the atmosphere, and finally $\rho_{0}$ is the air
density at sea level. In that case it is impossible to solve the equation
explicitly. However, it is interesting to note that the effect of drag loss is
usually quite small and is often reasonable to ignore it in the first
approach. In practice, the drag force is approximately only about 2\% of the
gravity force.

In the paper we will consider the equation $\left(  \ref{R1}\right)  ,$ with a
given relative velocity of emitted masses $c\left(  t\right)
=w\left(  t\right)  -v\left(  t\right)  $ and
$t\in\left[  t_{0},t_{1}\right]  .$ Chemical rockets produce exhaust jets at
velocity $\left\vert c\right\vert $ from $2500$ $m/s$ to $4500$ $m/s.$
If we assume that the relative velocity is a given function $c\left(
t\right)  =w\left(  t\right)  -v\left(  t\right)  ,$ we obtain the following
equation%
\[
m\left(  t\right)  \dot{v}\left(  t\right)  =c\left(  t\right)  \dot{m}\left(
t\right)  +f^{ext}\left(  t\right)  .
\]
Let us put $v\left(  t\right)  =\dot{x}\left(  t\right)  $ where $x\left(
t\right)  $ denotes rocket position at time $t$. Furthermore, we assume that
the external force depends not only on time $t$ but also on the rocket
position $x\left(  t\right)  $ and its velocity $v\left(  t\right)  $
according to gravitational and drag forces, i.e.
\[\label{fex}
f^{ext}\left(  t\right)  =f\left(  t,x\left(  t\right)  ,v\left(  t\right)
\right)  =-m\left(  t\right)  g-\frac{AC_{D}\rho_{0}}{2}v^{2}\left(  t\right)
\exp\left(  -\frac{x\left(  t\right)  }{H}\right)
\]
such that $f:\left[  t_{0},t_{1}\right]  \times\mathbb{R}\times\mathbb{R}%
\rightarrow\mathbb{R}$.\newline Therefore, we get the equation
\begin{equation}
m\left(  t\right)  \ddot{x}\left(  t\right)  =c\left(  t\right)  \dot
{m}\left(  t\right)  -m\left(  t\right)  g-\frac{AC_{D}\rho_{0}}{2}\dot{x}%
^{2}\left(  t\right)  \exp\left(  -\frac{x\left(  t\right)  }{H}\right)
\label{R2}%
\end{equation}
for any $t\in\left[  t_{0},t_{1}\right]  .$\newline We provide our equation
$\left(  \ref{R2}\right)  $ with the boundary conditions%
\begin{equation}
x\left(  t_{0}\right)  =x_{0},\text{ }x\left(  t_{1}\right)  =x_{1},
\label{R3}%
\end{equation}
which guarantee that the rocket attains the given point in the required period
of time.

After a shift $z\left(  t\right)  =x\left(  t\right)  -y\left(  t\right)  $ by
a linear function%
\begin{equation}
y\left(  t\right)  =a\left(  t-t_{0}\right)  +x_{0}. \label{T}%
\end{equation}
where $a=\frac{x_{1}-x_{0}}{t_{1}-t_{0}}$ and equation $\left(  \ref{R2}\right)  $
reads
\begin{equation}
\ddot{z}\left(  t\right)  =c\left(  t\right)  \frac{\dot{m}\left(  t\right)
}{m\left(  t\right)  }-g-\frac{AC_{D}\rho_{0}}{2}\frac{\left(  \dot{z}\left(
t\right)  +a\right)  ^{2}}{m\left(  t\right)  }\exp\left(  -\frac{z\left(
t\right)  +y\left(  t\right)  }{H}\right)  \label{R4}%
\end{equation}
for any $t\in\left[  t_{0},t_{1}\right]  $ and $z$ satisfies the homogenous
Dirichlet boundary conditions%
\begin{equation}
z\left(  t_{0}\right)  =z\left(  t_{1}\right)  =0. \label{R4b}%
\end{equation}
Consequently, one can reduce considered equation to a aforementioned in (\ref{BVP}) form%
\begin{equation}\label{eqz}
-\ddot{z}\left(  t\right)  =\alpha\left(  t\right)  \left(  \dot{z}\left(
t\right)  +a\right)  ^{2}\exp\left(  -\frac{z\left(  t\right)  }{H}\right)
+\beta\left(  t\right)  %
\end{equation}
where%
\begin{align}
\alpha\left(  t\right)   &  =\frac{AC_{D}\rho_{0}}{2m\left(  t\right)  }%
\exp\left(  -\frac{y\left(  t\right)  }{H}\right)  ,\label{albe}\\
\beta\left(  t\right)   &  =g-c\left(  t\right)  \frac{\dot{m}\left(  t\right)
}{m\left(  t\right)  }.\nonumber
\end{align}
Note that since $\int_{t_0}^{t_1}\dot{x}(t)dt = x_1-x_0=a(t_1-t_0)$ we are led to search for the solutions $z$ 
to the boundary value problem $(\ref{R4b})-(\ref{eqz})$ such that $\int_{t_0}^{t_1}\dot{z}(t)dt=0$ and therefore
sign changing in the derivative, which can be interpreted as the acceleration or slowing down with respect to
the average speed corresponding to the linear motion described by the function $y$ defined by $(\ref{T})$ of the rocket moving with the constant average speed $a$.

\section{Integral formulation\label{Sec1}}

It should be noted that due to the presence of the derivative $\dot{z}$ the
equation $\left(  \ref{R4}\right)$, or equivalently $(\ref{eqz})$, is not in a variational form. Therefore
our approach differs from the one employed in \cite{BorWal}, as well as from \cite{Bor} where an extension to elliptic problems was cosidered,  and makes use of the integral formulation approach. Specifically, the boundary
value problem $\left(  \ref{BVP}\right)  $ with $\left(  \ref{R4b}\right)  $
can be reformulated in the following way%
\begin{equation}
z\left(  t\right)  =%
{\displaystyle\int\limits_{t_{0}}^{t_{1}}}
G\left(  t,s\right)  F\left(  s,z\left(  s\right)  ,\dot{z}\left(  s\right)
\right)  ds+b\left(  t\right)  \label{Int}%
\end{equation}
where the nonlinear term $F$ is defined as
\[
F\left(  s,z,\dot{z}\right)  =\alpha\left(  s\right)  \left(  \dot
{z}+a\right)  ^{2}\exp\left(  -\frac{z}{H}\right)
\]
with the abuse of notation, namely $\dot{z}$ denotes in the above formula
the third independent variable of the function $F$ and the function $b$ is defined by%
\[
b\left(  t\right)  =%
{\displaystyle\int\limits_{t_{0}}^{t_{1}}}
G\left(  t,s\right)  \beta\left(  s\right)  ds.
\]
Recall that $\alpha\left(  t\right)  $ and $\beta\left(  t\right)  $ are
defined by formulas $\left(  \ref{albe}\right)  .$ The nonnegative function $G$ is the
Green function for the BVP (\ref{R4})--(\ref{R4b}), 
i.e. the continuous symmetric function satisfying the following conditions.

\begin{enumerate}
\item[a.] the homogeneous equation, i.e. for any $t\neq s$%
\[
\frac{\partial^{2}G}{\partial t^{2}}\left(  t,s\right)  =0\,,
\]

\item[b.] the boundary conditions, i.e. for any $s\in\left[  t_{0},t_{1}\right]$
\[
G\left(  t_{0},s\right)  =G\left(  t_{1},s\right)  =0\,,
\]

\item[c.] the jump condition, i.e. for any $t\in(t_{0},t_{1})$
\[
\lim_{s\rightarrow t^{-}}\frac{\partial G}{\partial t}\left(  t,s\right)
-\lim_{s\rightarrow t^{+}}\frac{\partial G}{\partial t}\left(  t,s\right)  =1\,.
\]
\end{enumerate}

This function can be obtained from $\left(  \ref{BVP}\right)  $ by double
integration and use of the the boundary conditions $\left(  \ref{R4b}\right)
$ and it reads
\begin{equation}
G\left(  t,s\right)  =\left\{
\begin{array}
[c]{c}%
\frac{1}{t_{1}-t_{0}}\left(  t-t_{0}\right)  \left(  s-t_{1}\right)  \text{,
}t<s,\\
\frac{1}{t_{1}-t_{0}}\left(  s-t_{0}\right)  \left(  t-t_{1}\right)  \text{,
}t>s.
\end{array}
\right.  \label{Green}%
\end{equation}
with the derivative%
\begin{equation}
\frac{\partial G}{\partial t}\left(  t,s\right)  =\left\{
\begin{array}
[c]{c}%
\frac{1}{t_{1}-t_{0}}\left(  s-t_{1}\right)  \text{, }t<s,\\
\frac{1}{t_{1}-t_{0}}\left(  s-t_{0}\right)  \text{, }t>s.
\end{array}
\right.  \label{Greender}%
\end{equation}

\section{Main results}

We shall work in the space $C^{1}=C^{1}\left(  \left[  t_{0},t_{1}\right]
,\mathbb{R}\right)  $ of continuously differentiable functions equipped with
the natural norm%
\[
\left\Vert z\right\Vert =\max\left\{  \left\vert z\right\vert ,\left\vert
\dot{z}\right\vert \right\}
\]
where $\left\vert z\right\vert =\sup_{t\in\left[  t_{0},t_{1}\right]
}\left\vert z\left(  t\right)  \right\vert $ and $\left\vert \dot
{z}\right\vert =\sup_{t\in\left[  t_{0},t_{1}\right]  }\left\vert \dot
{z}\left(  t\right)  \right\vert $ for any $z\in C^{1}.$ Next we can denote
the right hand side of $\left(  \ref{Int}\right)  $ as the value of the
integral operator $Sz,$ i.e.%
\begin{equation}
Sz(t)=%
{\displaystyle\int\limits_{t_{0}}^{t_{1}}}
G\left(  t,s\right)  F\left(  s,z\left(  s\right)  ,\dot{z}\left(  s\right)
\right)  ds+b\left(  t\right)  .\label{Sz}%
\end{equation}
Thus the existence of solution to $\left(  \ref{Int}\right)  $ is reduced to
finding a fixed point of the operator $S,$ i.e. such a $z\in C^{1}$ that
\[
Sz=z.
\]
First of all, note that the operator $S$ is well-defined, continuous and
compact. Indeed, for any $z\in C^{1}$ the function $Sz$ is continuous since
the functions appearing under the integral sign $\left(  \ref{Sz}\right)  $
are uniformly continuous and $\left(  Sz\right)  ^{\prime}$ is also continuous
by the integrability of $\frac{\partial G}{\partial t},$ cf. $\left(
\ref{Greender}\right)  .$ The continuity of the operator $S$ follows from the
smoothness of $F$ with respect both to $z$ and $\dot{z}$ and boundness of the
functions $G$ and $\frac{\partial G}{\partial t}.$ The compactness of the
operator $S$ requires the straightforward application of the Ascoli-Arz\`{e}la
theorem. 

\subsection{Main result. Sign insensitive case.}

In this subsection we provide crude but immediate estimates yielding by the
Schauder fixed point theorem the existence of a fixed point for the operator
$S.$ To this end we start with simple estimates of the sup norms $|\cdot|$ 
of the following functions%
\begin{align}
\left\vert Sz\right\vert  &  \leq G_{0}\left\vert \alpha\right\vert \left(
\left\vert \dot{z}\right\vert ^{2}+2a\left\vert \dot{z}\right\vert +a^2\right)
+\left\vert b\right\vert \label{estimates}\\
\left\vert \left(  Sz\right)  ^{\prime}\right\vert  &  \leq G_{1}\left\vert
\alpha\right\vert \left(  \left\vert \dot{z}\right\vert ^{2}+2a\left\vert
\dot{z}\right\vert +a^2\right)  +\left\vert \dot{b}\right\vert \nonumber
\end{align}
where
\begin{align*}
G_{0} &  =\sup_{t\in\left[  t_{0},t_{1}\right]  }%
{\displaystyle\int\limits_{t_{0}}^{t_{1}}}
\left| G\left(  t,s\right) \right| ds,\\
G_{1} &  =\sup_{t\in\left[  t_{0},t_{1}\right]  }%
{\displaystyle\int\limits_{t_{0}}^{t_{1}}}
\left|\frac{\partial G}{\partial t}\left(  t,s\right) \right| ds.
\end{align*}
Using formula $\left(  \ref{Green}\right)  $ one can easily derive the value
of
\[
G_{0}=\frac{3}{8}\left(  t_{1}-t_{0}\right)  ^{2}%
\]
and 
\[
G_{1}=\left(  t_{1}-t_{0}\right)  ^{2}.
\]

\begin{theorem}
The boundary value problem $\left(  \ref{BVP}\right)  -\left(  \ref{R4b}%
\right)  $ admits at least one solution provided the data are sufficiently small, to be more specific if
the condition $\left(  \ref{best}\right) $ is satisfied.
\end{theorem}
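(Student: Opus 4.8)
The plan is to deduce the existence of a fixed point of the operator $S$ from (\ref{Sz}) by the Schauder fixed point theorem. Since it has already been recorded that $S$ is well-defined, continuous and compact on $C^{1}$, what remains is to produce a nonempty, closed, bounded and convex set that $S$ maps into itself. The obvious candidate is a closed ball $B_{R}=\{z\in C^{1}:\|z\|\le R\}$ of some radius $R>0$; such a ball is automatically convex, closed, bounded and nonempty, so once $S(B_{R})\subseteq B_{R}$ has been established Schauder's theorem produces a $z\in B_{R}$ with $Sz=z$, and this $z$ solves the boundary value problem (\ref{R4b})--(\ref{BVP}).

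First I would convert the inclusion $S(B_{R})\subseteq B_{R}$ into a scalar condition on $R$. For $z\in B_{R}$ one has $|\dot z|\le\|z\|\le R$, so substituting this bound into the already-derived estimates (\ref{estimates}) and using $G_{1}>G_{0}$ gives
\begin{equation}
\|Sz\|=\max\{|Sz|,|(Sz)'|\}\le G_{1}|\alpha|\bigl(R^{2}+2aR+a\bigr)+|b| .
\label{prop:selfmap}
\end{equation}
Consequently $S(B_{R})\subseteq B_{R}$ holds as soon as the radius obeys the quadratic inequality
\begin{equation}
G_{1}|\alpha|\,R^{2}+\bigl(2aG_{1}|\alpha|-1\bigr)R+\bigl(aG_{1}|\alpha|+|b|\bigr)\le0 .
\label{prop:quad}
\end{equation}

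The decisive step is therefore to show that (\ref{prop:quad}) admits a positive solution, and this is precisely the role of the hypotheses (\ref{aest}) and (\ref{best}). Writing $k=G_{1}|\alpha|$, the left-hand side of (\ref{prop:quad}) is an upward-opening parabola in $R$ with vertex at $R^{\ast}=(1-2ak)/(2k)$. I anticipate that (\ref{aest}) is the requirement $2ak<1$ forcing $R^{\ast}>0$, while (\ref{best}) is the discriminant condition, equivalently $ak+|b|\le(1-2ak)^{2}/(4k)$, guaranteeing that the parabola actually attains nonpositive values; together they exhibit an entire interval of admissible radii, any one of which yields a self-map. The genuine obstacle is thus the bookkeeping identifying (\ref{aest})--(\ref{best}) with the solvability of (\ref{prop:quad}); the appeal to Schauder and the reuse of the compactness and continuity already in hand are then immediate. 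One further point deserves care: the factor $\exp(-z/H)$ inside $F$ is bounded on $B_{R}$ only by $\exp(R/H)$, so I would check that this bound has already been absorbed into the constant $|\alpha|$ used in (\ref{estimates}); were it not, the coefficient $k$ in (\ref{prop:quad}) would itself grow with $R$ and the elementary quadratic analysis would have to be replaced by a transcendental one.
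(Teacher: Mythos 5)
Your argument is the paper's argument: Schauder's theorem applied to $S$ on a closed ball $B(0,R)\subset C^{1}$, with the invariance reduced through the estimates (\ref{estimates}) to the solvability in $R>0$ of $G_{2}\left(R^{2}+2aR+a\right)+|b|\leq R$, where the paper's constant $G_{2}=|\alpha|\max\{G_{0},G_{1}\}=|\alpha|(t_{1}-t_{0})^{2}$ is exactly your $k=G_{1}|\alpha|$. Two of your anticipations, however, do not match the text, and both mismatches are instructive rather than damaging. First, (\ref{aest}) is not the condition $2aG_{2}<1$; it is the condition that the numerator $4a(a-1)G_{2}^{2}-4aG_{2}+1$ appearing in (\ref{best}) be positive, so that (\ref{best}) --- which is precisely your discriminant condition $\Delta>0$ --- can be satisfied by some $|b|\geq 0$. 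The requirement you single out as indispensable, namely that the vertex (equivalently both roots) of the upward-opening parabola be positive, is verified nowhere in the paper. It does hold on the first branch of (\ref{aest}), since there $2aG_{2}<\frac{\sqrt{a}}{\sqrt{a}+1}<1$, but on the second branch one has $2aG_{2}>\frac{\sqrt{a}}{\sqrt{a}-1}>1$, so the roots have positive product $\frac{aG_{2}+|b|}{G_{2}}$ and negative sum $\frac{1-2aG_{2}}{G_{2}}$, hence are both negative, and no admissible radius exists; your ``bookkeeping'' step is therefore a needed correction to the published argument, not a formality. Second, your worry about $\exp(-z/H)$ is equally well founded: inspecting (\ref{albe}), only $\exp(-y(t)/H)$ has been absorbed into $\alpha$, so the estimates (\ref{estimates}) tacitly bound $\exp(-z(s)/H)$ by $1$, which is legitimate only if $z\geq 0$ on $[t_{0},t_{1}]$; on $B(0,R)$ the honest bound is $\exp(R/H)$, and then, as you anticipate, the coefficient of your quadratic grows with $R$ and the elementary analysis no longer closes as written. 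In short: same method as the paper, but the two points you flag as ``to be checked'' are genuine gaps in the paper's own proof rather than details you may safely assume.
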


\begin{proof}
As announced we will make use of the Schauder fixed point theorem for the
operator $S$.defined by $\left(  \ref{Sz}\right)  .$ We should prove that it
maps some ball $B\left(  0,R\right)  $ in $C^{1}$ into itself. Using the
preliminary estimates $\left(  \ref{estimates}\right)  $ and defining
\[
G_{2}=\left\vert \alpha\right\vert \max\left\{  G_{0},G_{1}\right\}=\left\vert \alpha\right\vert\left(  t_{1}-t_{0}\right)  ^{2}
\]
the invariance of the ball $B\left(  0,R\right)  $ under the action of $S$ can
be reduced, for $B=\max\left\{ \left\vert b\right\vert,\left\vert \dot{b}\right\vert\right\}$ to the following condition%
\[
G_{2}\left(  R^{2}+2aR+a^2\right)  +B \leq R
\]
which can be satisfied by some $R>0$ if $2aG_2<1$ and
\[
\Delta=\left(  2aG_{2}-1\right)  ^{2}-4G_{2}\left(  G_{2}a^2+B \right)  = 1-4aG_2-4BG_2>0,
\]
or after rephrasing
\begin{equation}
4(a+B)G_2<1\,.\label{best}
\end{equation}
\end{proof}

\begin{corollary}
The condition $\left(  \ref{best}\right)$
is satisfied if $a$ and $\beta$ are sufficiently small.
\end{corollary}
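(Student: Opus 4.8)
The plan is to exploit the fact that $\beta$ enters the two conditions only through the quantity $\left\vert b\right\vert$, and only in condition $\left(\ref{best}\right)$; condition $\left(\ref{aest}\right)$ constrains solely $G_2=\left\vert\alpha\right\vert\left(t_1-t_0\right)^2$ and $a$, neither of which involves $\beta$. Accordingly I would first reinterpret $\left(\ref{aest}\right)$ as the statement that the numerator on the right of $\left(\ref{best}\right)$ is strictly positive. Writing $N(G_2)=4a(a-1)G_2^2-4aG_2+1$ and regarding it as a quadratic in $G_2$ with positive leading coefficient $4a(a-1)$ (here $a>1$ is used), the quadratic formula gives its roots as $\frac{a\pm\sqrt{a}}{2a(a-1)}$, so $N(G_2)>0$ precisely when $G_2$ lies outside the closed interval bounded by these roots, which is exactly $\left(\ref{aest}\right)$. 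Thus, once $\left(\ref{aest}\right)$ holds, the right-hand side of $\left(\ref{best}\right)$ equals the fixed positive constant $N(G_2)/(4G_2)$, independent of $\beta$.

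The second step is to make $\left\vert b\right\vert$ arbitrarily small by shrinking $\beta$. Since the Green function $\left(\ref{Green}\right)$ is nonnegative on $\left[t_0,t_1\right]^2$, for every $t$ I can estimate
\[
\left\vert b(t)\right\vert=\left\vert\int_{t_0}^{t_1}G\left(t,s\right)\beta\left(s\right)\,ds\right\vert\leq\left\vert\beta\right\vert\int_{t_0}^{t_1}G\left(t,s\right)\,ds\leq G_0\left\vert\beta\right\vert=\tfrac{3}{8}\left(t_1-t_0\right)^2\left\vert\beta\right\vert,
\]
where $\left\vert\beta\right\vert=\sup_{\left[t_0,t_1\right]}\left\vert\beta\right\vert$ and $G_0$ is the constant computed above. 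Taking the supremum over $t$ yields $\left\vert b\right\vert\leq\frac{3}{8}\left(t_1-t_0\right)^2\left\vert\beta\right\vert$, so $\left\vert b\right\vert\to 0$ as $\left\vert\beta\right\vert\to 0$.

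Combining the two steps finishes the argument: assuming $\left(\ref{aest}\right)$, the threshold $N(G_2)/(4G_2)$ is a strictly positive number not depending on $\beta$, while $\left\vert b\right\vert$ is bounded by a constant multiple of $\left\vert\beta\right\vert$; hence for every $\beta$ with $\left\vert\beta\right\vert$ small enough one has $\left\vert b\right\vert<N(G_2)/(4G_2)$, i.e. $\left(\ref{best}\right)$ holds. I expect the only real subtlety to be conceptual rather than computational: condition $\left(\ref{aest}\right)$ carries no dependence on $\beta$, so the corollary should be read as asserting that, whenever the structural condition $\left(\ref{aest}\right)$ on $G_2$ and $a$ is in force, the remaining condition $\left(\ref{best}\right)$ is automatically met once $\beta$ is taken sufficiently small. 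The estimate $\left\vert b\right\vert\leq G_0\left\vert\beta\right\vert$ is the single quantitative ingredient, and it is immediate from the nonnegativity and the known integral $G_0$ of the Green function.
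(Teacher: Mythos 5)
Your proof is correct, and since the paper states this corollary without any proof, your argument simply supplies the reasoning the authors evidently intended: condition $\left(\ref{aest}\right)$ is $\beta$-independent and amounts to positivity of the numerator in $\left(\ref{best}\right)$, while $\left\vert b\right\vert\leq G_{0}\left\vert\beta\right\vert$ makes $\left(\ref{best}\right)$ hold for $\left\vert\beta\right\vert$ small. Your reading of the statement (that $\left(\ref{aest}\right)$ must be assumed, since no smallness of $\beta$ can produce it) is the only sensible one and is worth the remark you make.
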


\begin{example}
If $A=0,$ i.e. we neglect the drag force $\alpha=0$, accounting for negligible, in real
world model, part, our assumptions are naturally satisfied. In such a context
it is natural that one can attain the destination point in the desired span 
of time. In fact since the equation is linear one can derive the explicit
formula for the solution.
\end{example}

\subsection{Further developments. A priori estimates. Sign sensitive case.}

In this section we present some extensions to provide more subtle a priori estimates taking into regard
the negative sign of some terms appearing in $\left(  \ref{BVP}\right) $. Instead
of using Schauder Theorem, one can the try to apply Schafer Theorem yielding a fixed point as well.

First we neglect non-negative term in (\ref{BVP}) and integrate inequality
$$-\ddot{z}(t)\ge \beta (t)$$ using boundary conditions to obtain
$$(t-t^*)\dot{z}(t)\le -(t-t^*)\int_{t^*}^t\beta(s)ds $$
for any $t$, while $t^*$ chosen such that $\dot{z}(t^*)=0$ which exists due to the zero homogeneous
boundary conditions on imposed on the function $z$. 

Next one can integrate, after dividing 
by $\left(  \dot{z}+a\right)  ^{2}$,  both sides of (\ref{BVP}) to get better a priori bounds as follows
$$\dot{\psi}(t)\ge \beta(t) \psi^2(t)$$
where $\psi(t)=(\dot{z}(t)+a)^{-1}$. Hence for any $t$ after integrating the above inequality we have $$|\dot{z}(t)+a|(t-t^*)\le (t-t^*)(|a|-\int_{t^*}^t \beta(s)ds)\,.$$
Thus we can get by (\ref{Tso}) partial bounds for $\dot{z}$ and consequently for $z$. 

This approach we shall not fully exploit herein, since there is a gap in the above a priori estimates but it will be  addressed in the subsequent papers.
 


\bigskip

\end{document}